\DeclareMathOperator\C{{\it\mathbb C^n}}
\DeclareMathOperator\R{{\it \mathbb R}}
\DeclareMathOperator\Om{{\it\Omega}}
\DeclareMathOperator \lbr {{\it\lbrace}}
\DeclareMathOperator \rbr {{\it\rbrace}}
\DeclareMathOperator\pa{\partial}
\def\Ga{\Gamma}
\def\Om{\Omega}
\newtheorem{theorem}{Theorem}
\newtheorem{lemma}[theorem]{Lemma}
\newtheorem{corollary}[theorem]{Corollary}
\newtheorem{example}[theorem]{Example}
\newtheorem{remark}[theorem]{Remark}
\title{Alexandrov-Bakelman-Pucci type estimate for plurisubharmonic functions}
\subjclass[2010]{32W20}
\keywords{complex Monge-Amp\`ere equation, a priori estimates}
\author{S\l awomir Dinew}\author{\.Zywomir Dinew}
\address{S\l awomir Dinew\\
Department of Mathematics and Computer Science\\
Jagiellonian University, Poland}\email{slawomir.dinew@im.uj.edu.pl}
\address{\.Zywomir Dinew\\
	Department of Mathematics and Computer Science\\
	Jagiellonian University, Poland}\email{zywomir.dinew@uj.edu.pl}
\begin{document}
\begin{abstract}We prove an optimal Alexandrov-Bakelman-Pucci type estimate for plurisubharmonic functions  without assuming their continuity. This generalizes a result of Y. Wang. As a corollary we  generalize an estimate from \cite{DD18}. We also address a problem posed in \cite{Wan12}.
\end{abstract}
\maketitle

\section{Introduction}

The Alexandrov weak maximum principle is a basic tool in modern PDE theory. In its classical version for a function $u\in C^{2}(\Omega)\cap C(\bar{\Omega})$ living in a bounded domain
$\Omega$ it reads:

\begin{equation}\label{alexandrov} \sup_{\Omega} u\leq \sup_{\partial\Omega} u+\frac{diam(\Omega)}{\omega_{n}^{1/n}}\left(\int _{\{-u=\Gamma_{-u}\}}|\det D^2 u|\right)^{\frac{1}{n}},\end{equation}
see Lemma 9.2 in \cite{GT01}. Recall that $\omega_n$ above stands for  the volume of the unit ball in $\R^n$ and $\{-u=\Gamma_{-u}\}$ is the so-called contact set (see below). 
This inequality is especially fundamental in the viscosity theory of nonlinear elliptic second order   equations (see \cite{CIL92}
for the notions of viscosity theory). In particular it is instrumental in the proof of the more general Alexandrov-Bakelman-Pucci estimate, which establishes a uniform bound
on the viscosity supersolutions $u$ of the
equation
\begin{equation}\label{secord}F(D^2u)=f,\end{equation}
with $F$ being a uniformly elliptic second order differential operator and $f\in C(\Omega)$. The Alexandrov-Bakelman-Pucci estimate, or ABP for short, reads:

\begin{equation}
\label{ABP}
\sup _{\Omega} u^{-}\leq C diam(\Omega)\left(\int_{\Omega\cap\{u=\Gamma_u\}} (f^{+})^n\right)^{\frac{1}{n}},\end{equation}
(see  Theorem 3.6 in \cite{CC95}), where $u$, which is continuous and non-negative on the boundary, is a viscosity supersolution of \eqref{secord},  $u^-$ denotes $\max\lbr -u,0\rbr$, 
$f^{+}:=\max\lbr f,0\rbr$ and
 $C$ is a universal constant. 

Nowadays many improvements  of this estimate exist under special assumptions on $u$ or on the equation (see \cite{Cab95}, \cite{CCKS96}, \cite{AIM06} to mention just a few). 
In any case a Sobolev regularity of order at least $W^{2,p}_{loc}, p>\frac{n}{2}$ is required for $u$ for the theory to work for general second order nonlinear equations. Note that by the Sobolev embedding this 
forces $u$ to be continuous. 

Recently viscosity methods  were applied for the complex Monge-Amp\`ere equation, see \cite{Zer13}. 
 In \cite{Wan12} Y. Wang, extending results from \cite{Blo05} and \cite{CP92}, proved in this setting  that  if  $u$ is plurisubharmonic, ($PSH$ for short) {\it and continuous} 
  one has the following bound:
 
 \begin{equation}
 \label{wang12}
 \sup_{\Omega}u^{-}\leq Cd\Vert f\cdot\chi_{\{u=\Gamma_u\}}\Vert_{L^2(\Omega)}^{\frac{1}{n}},\end{equation}
 where $\Omega\subset\subset B_d$ ($B_d$ is a ball of radius $d$), $u\in C(\bar\Omega)$ satisfies $(dd^c u)^n\leq f$ in the viscosity sense, $u_{|\partial\Omega}\geq 0$, and $f\geq 0$ is a real valued
 function from $C(\bar\Omega)$. Note that in this bound no convexity assumptions on $\Omega$ are made and, more importantly, the $L^2$ norm on the right
 hand side is taken only over a special subset of $\Omega$.
 
 The standard definitions in viscosity theory require that viscosity supersolutions have to be {\it lower semicontinuous} - \cite{CIL92, Zer13}.
 On the other hand plurisubharmonic functions are axiomatically {\it upper semicontinuous}. Hence the continuity assumption in Wang's result is
 natural from the viscosity point of view.
 
 On the other hand there are many $PSH$ functions $u$ which fail to be continuous, yet the Monge-Amp\`{e}re operator
 $(dd^c u)^n$ is well-defined in the sense of pluripotential theory. In fact Bedford and Taylor defined $(dd^c u)^n$ as a non-negative 
 Borel measure for a continuous plurisubharmonic function $u$ in \cite{BT76}, and then generalized the construction to
 $u\in L^{\infty}_{loc}$ in \cite{BT82}. We recall that this passage is not just a matter of technicalities. It requires delicate potential theoretic
 arguments,  but the construction allowed the resolution of several long-standing open problems (see \cite{BT82} for more details). Later on B\l ocki \cite{Blo04},\cite{Blo06} found the exact conditions on $u$ under
 which Bedford and Taylor's definition  can be applied. In fact many discontinuous $PSH$ functions have measures with smooth densities - any discontinuous $PSH$ function dependent
 on a fewer than $n$ variables would do. There are also other types of {\it maximal} $PSH$ functions which are discontinuous (see for example \cite{Sic81}).
 We shall also provide such examples with almost everywhere positive densities (see  Example \ref{example} below). 
 
 This clearly shows that there is a discrepancy between pluripotential and viscosity {\it supersolutions} - a fact that has been observed already in
 \cite{EGZ11}. On the bright side pluripotential and viscosity {\it subsolutions} are equivalent (see \cite{EGZ11, Zer13}). We refer the reader to the recent paper
 \cite{GLZ}, where inequalities for mixed Monge-Amp\`ere measures are studied from viscosity and pluripotential viewpoint. It is worth pointing out that in \cite{GLZ} the lack of continuity
 is a serious source of troubles (inequalities for mixed Monge-Amp\`ere measures of continuous $PSH$ functions can be studied using much simpler tools, see \cite{Blo96}).
 
 Despite these discrepancies there are also results linking both theories. In fact an easy argument (see \cite{Zer13})
 shows that a
 pluripotential supersolution $u$ with continuous right-hand side becomes viscosity supersolution once the {\it lower semicontinuous regularization}
 $$u_{*}(w):=\liminf_{z\rightarrow w}u(z)$$
 is applied (we use the convention ${\displaystyle \liminf_{z\rightarrow w}=\liminf_{\stackrel{z\rightarrow w}{z\neq w}}}$). We note that for continuous up to the boundary $u$ this regularization keeps $u$ fixed and hence continuous pluripotential supersolutions {\it are} also
 viscosity supersolutions. We also note that for a generic lower semicontinuous function $u$ it holds that $u_{*}\geq u$, but in general $u_{*}\neq u$ (even more: $(u_{*})_{*}$ need not be equal to $u_{*}$), whereas if $u$ is upper semicontinuous, in particular plurisubharmonic, then $u_{*}\leq u$.
 
 Yet another subtle issue is the continuity up to the boundary and the {\it right notion} of boundary values. The standard assumption $u\in C(\overline{\Omega})$
 resolves all these issues in the continuous setting. It is thus worth pointing out that Wang's estimate {\it fails} dramatically if one merely assumes
 $u\in C(\Omega)$ and $u$ is defined on $\pa\Omega$ by ${\displaystyle u(z):=\limsup_{\Omega\ni w\rightarrow z\in\pa\Omega}u(w)}$, which is the standard potential-theoretic extension making $u$ upper semicontinuous on $\bar\Omega$, (see Example \ref{ex}). Discarding the boundary continuity assumption has further negative consequences. For example one can no longer use the uniqueness of solutions to the Dirichlet problem or various versions of the comparison principle.   
 
  The aim of this paper is to investigate  whether one can relax the continuity assumptions
 in Wang's argument (with suitable modifications) and prove an Alexandrov-Bakelman-Pucci type estimate in the special case of bounded plurisubharmonic
 $u$ and right hand side function $f\in L^{2}(\Omega)$. The affirmative answer is summarized 
 in the following main theorem:
 
 \begin{theorem}\label{ABPestimate}
 	Let $u$ be a bounded plurisubharmonic function (not necessarily continuous) in a bounded domain $\Om\subseteq\mathbb C^{n}$. Suppose that $(dd^cu)^n\leq f$ as measures for some non-negative function  $f\in L^{2}(\Om)$. 
 	Then the following Alexandrov-Bakelman-Pucci type inequality holds:
 	\begin{equation}
 	\label{mainn}
  \sup_{\Omega}u^{-}\leq \sup_{\pa \Omega} (u_{*})^{-}+Cdiam(\Omega)\Vert f\cdot\chi_{\{ \Gamma_{u_{*}}=\widetilde{u_{*}}\}}\Vert_{L^2(\Omega)}^{\frac{1}{n}},	\end{equation}
 	where $C$ is a numerical constant dependent only on the dimension $n$. 
 \end{theorem}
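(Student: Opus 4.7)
The plan is to follow the scheme of Wang's proof of \eqref{wang12} but systematically replace the function $u$ by its lower semicontinuous regularization $u_{*}$. Since $u$ is upper semicontinuous we have $u_{*}\leq u$ pointwise, hence $u^{-}\leq (u_{*})^{-}$, so it is enough to establish the stated inequality with $u^{-}$ replaced by $(u_{*})^{-}$ on the left-hand side. The function $u_{*}$ agrees with $u$ off a pluripolar set, so its pluripotential Monge-Amp\`ere measure is still $(dd^{c}u)^{n}$ and is still dominated by $f$. Moreover, the submean value inequality for the bounded plurisubharmonic $u$ combined with the fact that $\{u_{*}<u\}$ has Lebesgue measure zero yields $\widetilde{u_{*}}=u$, so the set $\{\Gamma_{u_{*}}=\widetilde{u_{*}}\}$ is precisely the set where the convex envelope from below of $u_{*}$ touches $u$ itself.

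The core mechanism is then the classical Alexandrov gradient map argument. One shows that $\nabla\Gamma_{u_{*}}$ covers a ball in $\R^{2n}$ of radius comparable to $(\sup_{\Om}(u_{*})^{-}-\sup_{\pa\Om}(u_{*})^{-})/\mathrm{diam}(\Om)$, so that the volume of this ball is controlled from above by $\int_{\{\Gamma_{u_{*}}=\widetilde{u_{*}}\}}|\det D^{2}\Gamma_{u_{*}}|\,d\lambda$ via the Alexandrov change of variables. On the convex contact set the real Hessian $D^{2}\Gamma_{u_{*}}$ is non-negative, so one has the pointwise arithmetic-geometric inequality $|\det D^{2}v|\leq c_{n}\bigl((dd^{c}v)^{n}/d\lambda\bigr)^{2}$ valid for plurisubharmonic $v$ whose real Hessian is non-negative; combined with $(dd^{c}u)^{n}\leq f$, this replaces the real Hessian determinant by $f^{2}$ and, after taking the appropriate root, yields the $L^{2}$ norm of $f$ raised to the power $1/n$, exactly as in Wang's argument.

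To execute these steps rigorously despite the discontinuity of $u$, I would approximate $u$ by smooth plurisubharmonic functions $u_{\varepsilon}:=u*\rho_{\varepsilon}$ on a slightly shrunken subdomain $\Omega'\subset\subset\Om$, apply Wang's estimate to each $u_{\varepsilon}$, and pass to the limit $\varepsilon\to 0$. The main obstacle is the simultaneous identification of the limiting contact set and the $L^{2}$ control of the densities $\det(u_{\varepsilon,i\bar{j}})$ there. Because $u_{\varepsilon}\searrow u$ and, correspondingly, the convex envelopes $\Gamma_{u_{\varepsilon}}$ decrease to $\Gamma_{u_{*}}$, a limit contact point must satisfy $\Gamma_{u_{*}}(z)=\lim u_{\varepsilon}(z)=u(z)=\widetilde{u_{*}}(z)$, which explains the asymmetric-looking set on the right-hand side: $\Gamma_{u_{*}}$ can only meet $u_{*}$ from below at points where $u_{*}$ already equals its upper regularization. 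For the $L^{2}$ control, the weak convergence $(dd^{c}u_{\varepsilon})^{n}\rightharpoonup (dd^{c}u)^{n}\leq f\,d\lambda$, together with the Alexandrov differentiability of the convex envelope $\Gamma_{u_{*}}$ on its contact set, lets one upgrade the weak limit of $\det(u_{\varepsilon,i\bar{j}})\chi_{S_{\varepsilon}}$ to a pointwise $L^{2}$ bound by $f\chi_{\{\Gamma_{u_{*}}=\widetilde{u_{*}}\}}$. The boundary contribution $\sup_{\pa\Om}(u_{*})^{-}$ is incorporated by subtracting this constant off at the outset before forming $\Gamma_{u_{*}}$, and the relevance of using $u_{*}$ rather than $u$ at the boundary is confirmed by Example \ref{ex}.
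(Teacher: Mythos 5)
The proposal has a genuine gap at exactly the step where the paper introduces its key new idea. You correctly reduce to a volume bound on the gradient image of the contact set of $\Gamma_{u_{*}}$, and you correctly identify the B\l ocki-type inequality $\det D^{2}v\le c_{n}\bigl(\det(v_{i\bar j})\bigr)^{2}$ as the tool for passing from real to complex Hessians. But you then assert that this, \emph{combined with} $(dd^{c}u)^{n}\le f$, ``replaces the real Hessian determinant by $f^{2}$'' on the contact set. This is precisely the nontrivial step. The inequality would have to be applied to $v=\Gamma_{u_{*}}$ (only there is the real Hessian nonnegative), and one would then need $(dd^{c}\Gamma_{u_{*}})^{n}\le f$ on the contact set. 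The hypothesis $(dd^{c}u)^{n}\le f$ is an inequality of Borel measures for a possibly very singular $u$; it yields no pointwise a.e.\ information about second-order Taylor expansions of $u$, let alone of its convex envelope, and there is no comparison between $(dd^{c}\Gamma_{u_{*}})^{n}$ and $(dd^{c}u)^{n}$ that follows just from $\Gamma_{u_{*}}\le u_{*}\le u$ with touching at the contact set. Wang achieves this step via second-order viscosity test functions, which require continuity of $u$ and $f$; the whole point of the present theorem is that these tools are unavailable.

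Your fallback, approximating by $u_{\varepsilon}=u*\rho_{\varepsilon}$ and applying Wang's estimate, does not repair this. Convolution does not interact well with the Monge--Amp\`ere operator: from $(dd^{c}u)^{n}\le f$ one cannot deduce $(dd^{c}u_{\varepsilon})^{n}\le f_{\varepsilon}$ for any natural smoothing $f_{\varepsilon}$ of $f$, so there is no family of continuous right-hand sides to which Wang's theorem applies. Even granting that, the passage from weak convergence of $\det(u_{\varepsilon,i\bar j})\chi_{S_{\varepsilon}}$ to a \emph{pointwise} $L^{2}$ bound by $f\chi_{\{\Gamma_{u_{*}}=\widetilde{u_{*}}\}}$, and the claimed convergence of the contact sets, are asserted without mechanism; these are exactly the kinds of statements that fail for merely weakly convergent sequences of measures. (A smaller inaccuracy: $\widetilde{u_{*}}$ is the extension by zero of $\min\{u_{*},0\}$, not $u$, and $u_{*}$ is not even upper semicontinuous, so $(dd^{c}u_{*})^{n}$ is not directly defined.)

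The paper circumvents all of this by a comparison-function construction rather than a limiting one: at each contact point $z_{0}$ it solves the \emph{real} Monge--Amp\`ere Dirichlet problem $\det D^{2}v=f^{2}/(4^{n}(n!)^{2})$ on a small ball with boundary data $\Gamma_{u_{*}}$, uses B\l ocki's real-vs-complex Hessian inequality on approximants to get $(dd^{c}v)^{n}\ge f$ as measures, and then invokes the Bedford--Taylor comparison principle for bounded plurisubharmonic functions to deduce $v\le u$, hence $v\le u_{*}$, hence $v\le\Gamma_{u_{*}}$ since $v$ is convex and nonpositive. Monotonicity of the Alexandrov gradient image under ordered convex functions with matching boundary data, together with Rauch--Taylor's identification of the Alexandrov and weak real Monge--Amp\`ere measures, then gives the volume bound $\lambda^{2n}(\partial\Gamma_{u_{*}}(B_{r}(z_{0})))\le\int_{B_{r}(z_{0})}f^{2}/(4^{n}(n!)^{2})$. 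This converts the measure-theoretic hypothesis into a genuine pointwise inequality between functions via the comparison principle, which is the step missing from your proposal.
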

 \begin{remark}
 If $u$ is plurisubharmonic and defined in a larger domain $U$ containing $\bar\Omega$, then one can use ${\displaystyle \liminf_{\Omega\ni w\rightarrow z\in\pa\Om}u}$ rather than $u_*$ on the boundary of 
 $\Omega$ which results in a slightly better bound in the above inequality - see Example \ref{ex3}.
 \end{remark}

Here and below whenever the measure $(dd^c u)^n$ is absolutely
 continuous with respect to the Lebesgue measure we will write, abusing the notation slightly,  $(dd^c u)^n=f$, where  $f$ is the density of the measure.
 
 As one application of this generalization we mention that
 the following theorem was proved in \cite{DD18} (Theorem $31$) with the extra assumption, that $u$ is continuous.
 \begin{theorem}\label{blocki} Let $U\subseteq \C$ be a domain that contains the ball $$B_R(z_{0})=\{z\in\C:  \Vert z-z_{0}\Vert\leq R\}.$$ Assume that a continuous $u\in PSH(U)$ obeys the conditions:
 	\begin{enumerate}
 		\item For some $\Lambda>0$  it holds that $\Lambda d\lambda^{2n}\geq (dd^cu)^{n}\geq 0$ on $B_R(z_{0})$ as measures, where $\lambda^{2n}$ is the Lebesgue measure.
 		\item There exists $\sigma>0$ such that  $u(z)\geq \sigma ||z-z_{0}||^2$ when $z\in B_R(z_{0})$ and $u(z_{0})=0$.
 	\end{enumerate}
 	Then, there exists a constant $c=c(n,\Lambda)$ such that $u(z)\leq \frac{c}{\sigma^{2n-1}}\Vert z-z_{0}\Vert^{2}$ for all 
 	$z\in B_R(z_{0})$.
 \end{theorem}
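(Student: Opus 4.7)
The plan is to run the proof of Theorem 31 in \cite{DD18} essentially unchanged, substituting the new Alexandrov-Bakelman-Pucci inequality \eqref{mainn} of Theorem \ref{ABPestimate} for Wang's estimate \eqref{wang12}. In \cite{DD18} the continuity hypothesis on $u$ enters only as a prerequisite for invoking \eqref{wang12}; every remaining step (choosing a well-placed ball, constructing an auxiliary plurisubharmonic function $v$ from $u$ together with a quadratic comparison barrier, and optimizing the resulting bound in $\sigma$ and $\Lambda$) relies solely on $u$ being bounded plurisubharmonic with $(dd^{c}u)^{n}\leq\Lambda d\lambda^{2n}$ and on the pointwise control $u(z)\geq\sigma\Vert z-z_{0}\Vert^{2}$. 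All of these hypotheses are preserved when one drops the continuity assumption.

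Concretely, I would fix a target point $z\in B_{R}(z_{0})$ and produce, exactly as in \cite{DD18}, an auxiliary PSH function $v$ on a suitably chosen ball $B$, whose Monge-Amp\`{e}re mass $(dd^{c}v)^{n}$ is dominated by a multiple of $(dd^{c}u)^{n}$, hence by a multiple of $\Lambda d\lambda^{2n}$. Applying \eqref{mainn} to $v$ then gives
\[
\sup_{B}v^{-}\leq\sup_{\pa B}(v_{*})^{-}+C\,diam(B)\,\Vert f\cdot\chi_{\{\Gamma_{v_{*}}=\widetilde{v_{*}}\}}\Vert_{L^{2}(B)}^{1/n}.
\]
The $L^{2}$-integral on the right is controlled by $C(n,\Lambda)$ times the appropriate power of the radius of $B$, exactly as in the continuous argument. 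For the boundary term, the pointwise lower bound $u\geq\sigma\Vert\cdot-z_{0}\Vert^{2}$ passes automatically to the lower semicontinuous regularization $u_{*}$, because the right-hand side is continuous; in particular $u_{*}\geq 0$ with $u_{*}(z_{0})=0$, and the control of $(v_{*})^{-}$ on $\pa B$ proceeds exactly as the control of $v^{-}$ in \cite{DD18}. Optimizing the radii and the remaining free parameters entering the definition of $v$ as in \cite{DD18} then produces the claimed dependence $c/\sigma^{2n-1}$.

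The main, and essentially the only, conceptual obstacle is the transition from $u$ to $u_{*}$ on the boundary of $B$, since a priori the two functions may disagree on a pluripolar subset of $\pa B$. This passage is however harmless: the explicit lower bound $u\geq\sigma\Vert\cdot-z_{0}\Vert^{2}$ carries over to $u_{*}$ verbatim, and the contact set $\{\Gamma_{v_{*}}=\widetilde{v_{*}}\}$ appearing in \eqref{mainn} is, against the absolutely continuous measure $(dd^{c}v)^{n}\leq\Lambda d\lambda^{2n}$, no larger than the classical contact set $\{\Gamma_{v}=v\}$ used in \cite{DD18}. This is precisely the step where Wang's continuity hypothesis was invoked in \cite{DD18}, and its removal via Theorem \ref{ABPestimate} is what allows one to drop continuity of $u$ from the statement of Theorem \ref{blocki}.
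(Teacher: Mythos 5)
Your proposal is correct and matches the paper's own approach: the paper likewise observes that the continuity hypothesis in the proof of Theorem 31 of \cite{DD18} was used only to invoke Wang's estimate (via Lemma 29 there), and that this step can now be replaced by the continuity-free ABP estimate, formalized as Corollary \ref{pota}, a direct consequence of Theorem \ref{ABPestimate} when $f\in L^{\infty}$. One small caveat: your closing claim that the contact set $\{\Gamma_{v_{*}}=\widetilde{v_{*}}\}$ is ``no larger than'' the classical contact set of \cite{DD18} is both unnecessary and not obviously true (lower regularization changes the envelope and the contact set in ways that do not yield a clean inclusion); the argument goes through simply because $(dd^{c}v)^{n}\leq\Lambda\,d\lambda^{2n}$ lets one bound the $L^{2}$ integral in \eqref{mainn} over whatever contact set appears by $\Lambda\cdot Vol(B)^{1/2}$, which is precisely the content of Corollary \ref{pota} that the paper uses.
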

 
 The only place where we needed the continuity was Lemma $29$ in \cite{DD18}, which  now  can be substituted
 by Corollary \ref{pota} below, so the continuity assumption can  be dropped.
 
 \begin{theorem}\label{three}
 	The conclusion of Theorem \ref{blocki} holds even without the continuity assumption on $u$. 
 \end{theorem}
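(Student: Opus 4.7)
The plan is to follow the proof of Theorem \ref{blocki} as presented in \cite{DD18} (Theorem $31$ there) and substitute the new continuity-free ABP bound at the single step where continuity was previously used. As the authors note in the sentence preceding Theorem \ref{three}, continuity of $u$ enters the argument of \cite{DD18} only through Lemma $29$ of that paper, a pluripotential ABP-type inequality for continuous plurisubharmonic functions. Its continuity-free analogue is Corollary \ref{pota}, itself an immediate consequence of Theorem \ref{ABPestimate}. Hence the proof reduces to a textual substitution of Corollary \ref{pota} for Lemma $29$ of \cite{DD18}.

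Concretely, one first verifies that the hypotheses of Corollary \ref{pota} are satisfied by the $u$ in Theorem \ref{blocki}: it is plurisubharmonic on $U\supset \overline{B_R(z_0)}$, hence bounded on $B_R(z_0)$, and the inequality $(dd^cu)^n\le \Lambda\, d\lambda^{2n}$ provides a non-negative $L^2$ density. Second, by the remark following Theorem \ref{ABPestimate} the boundary term in \eqref{mainn} may be taken in the form $\sup_{\partial B_R(z_0)}\bigl(\liminf_{B_R(z_0)\ni w\to z}u(w)\bigr)^-$, which vanishes since $u(z)\ge \sigma\|z-z_0\|^2\ge 0$ on $B_R(z_0)$. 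Thus the boundary contribution is trivial, exactly as it was in \cite{DD18}.

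The main obstacle I anticipate is the appearance of the contact set $\{\Gamma_{u_*}=\widetilde{u_*}\}$ in place of the contact set of $u$ itself, which was the object manipulated in \cite{DD18}. However, since $u_*\le u$ with equality outside a pluripolar, and in particular Lebesgue null, set, the $L^2$-norm of $f$ over either of the two contact sets is the same. Once this identification is made, the barrier construction and the iterative scheme in \cite{DD18} that produce the quantitative $\sigma^{-(2n-1)}$ scaling apply verbatim, yielding Theorem \ref{three}.
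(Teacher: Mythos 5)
Your first two paragraphs reproduce the paper's own argument: the paper proves Theorem \ref{three} by the single observation, stated just before its statement, that Lemma $29$ in \cite{DD18} is the only point where continuity of $u$ was used, and that Corollary \ref{pota} supplies its continuity-free replacement; your verification of the hypotheses of Corollary \ref{pota} and your handling of the boundary term are correct and in line with the paper.

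Your third paragraph, however, is both unnecessary and unsound. It is unnecessary because Corollary \ref{pota} makes no reference to a contact set at all --- in deriving it from Theorem \ref{ABPestimate} the contact-set term has already been majorized by $Vol(U)^{1/(2n)}\Vert f\Vert_{L^\infty(U)}^{1/n}$, so the discrepancy between the contact set of $u$ and that of $u_*$ simply never enters the substitution. It is unsound because the inference does not go through: from $\lambda^{2n}(\{u_*<u\})=0$ one cannot conclude that $\{\Gamma_u=\tilde u\}$ and $\{\Gamma_{u_*}=\widetilde{u_*}\}$ agree up to Lebesgue-null sets, since the convex envelope is a global operation sensitive to pointwise changes; lowering $\tilde u$ at even a single point to the smaller value $\widetilde{u_*}$ can change the envelope, and hence the contact set, on a set of positive measure. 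Fortunately the contact set drops out once Corollary \ref{pota} is invoked, so this misstep is harmless, but it should be deleted rather than relied upon.
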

 
 In \cite{Wan12}, Wang   posed the following problem  (see Remark 12 there):  Ko\l odziej's estimate yields that if $0\leq f\in L^p(\Omega)$, for some $p > 1$, then the plurisubharmonic solution $u$ of $(dd^cu)^n =f$, $u\geq0$ on $\partial\Omega$ satisfies
 $$\sup_{\Omega} u^{-}\leq C(p, n, diam(\Omega))\| f \|_{L^{p}(\Omega)}^{\frac1n}.$$ Comparing this with \eqref{wang12} or \eqref{mainn} one wonders whether
 or not $\| f\chi_{\{\Gamma_{ u_{*}}=\widetilde{u_{*}}\}}\|_{L^2(\Omega)}$ can control
 $\| f \|_{L^{p}(\Omega)}$ or vice versa. We show that the answer is negative in general, see Example \ref{wanggg} below.  Ko\l odziej's estimate itself will be treated in a subsequent paper.
 
 We also present some  examples, further remarks,  and applications of Theorem \ref{ABPestimate}.
  \section{Proof of the main theorem}
  We refer to \cite{Kol05,Blo96} for the basics of pluripotential theory, in particular for the construction of the Monge-Amp\`ere measures for locally bounded plurisubharmonic functions.
  For the viscosity theory good sources are \cite{CIL92,CC95} and for the special case of the viscosity theory of the complex Monge-Amp\`ere operator we refer to the survey \cite{Zer13}.
  
  Recall that for a convex function $v$ defined on a domain $\Omega$ (treated as a subdomain of $\mathbb C^n$ identified with $\mathbb R^{2n}$) the {\it gradient image} is defined as follows:
  \begin{equation}\label{gradim}
   \partial v(x):=\lbrace p\in \mathbb R^{2n}:  \  v(y)\geq v(x)+\langle p,y-x\rangle,\ \forall y\in \Omega\rbrace,
  \end{equation}
  with $\langle p,q\rangle$ denoting the usual Euclidean inner product. Note that, by convexity, it does not matter whether the inequality holds in the whole $\Omega$ or just locally around $x$, that is, the definition of the gradient image is independent of $\Omega$.   More generally for a Borel set $A$ the gradient image of $A$
  is defined by 
  $$\partial v(A):=\bigcup_{x\in A}\partial v(x).$$

  It is a classical fact (see Lemma 1.1.12 \cite{Gu01} for instance) that for almost every vector $p\in\partial v(A)$ there is a 
  {\it unique} $x\in A$ such that $p$ is in the gradient image of the point $x$. This fact leads to the classical construction of
  Alexandrov's Monge-Amp\`ere measure of a convex function (see Section 1.1 in \cite{Gu01} for a modern exposition):
  \begin{theorem} Given a convex function $v$ on a domain $\Omega$ and any Borel subset $A\subseteq\Omega$ the set function
  $$Mv(A):=\lambda^{2n}(\partial v(A))$$
   is a Borel measure, which is finite on compact sets. 
  \end{theorem}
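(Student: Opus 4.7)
The plan is to verify four things in order: (i) $Mv$ is finite on compact sets; (ii) the gradient images of disjoint Borel sets overlap only in a Lebesgue-null set; (iii) $\partial v(A)$ is Lebesgue measurable for every Borel $A$; (iv) countable additivity. Step (iii) is the delicate one; the other three are essentially bookkeeping once the right ingredients are in place.

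For (i), fix a compact $K \Subset \Omega$ and an open $U$ with $K \Subset U \Subset \Omega$. Convexity of $v$ implies that $v$ is locally Lipschitz, so it has some Lipschitz constant $L=L(U)$ on $U$. The defining inequality \eqref{gradim} then forces $|p| \leq L$ for every $p \in \partial v(x)$ with $x \in K$: testing $v(y) \geq v(x) + \langle p, y-x \rangle$ with $y = x + t p/|p|$ for small $t>0$ yields $t|p| \leq v(x + tp/|p|) - v(x) \leq tL$. Hence $\partial v(K) \subseteq \{p \in \mathbb{R}^{2n} : |p| \leq L\}$ and $Mv(K) < \infty$. A standard limit argument using continuity of $v$ shows that $\partial v(K)$ is in fact closed — take $p_j \in \partial v(x_j)$ with $x_j \to x \in K$ and $p_j \to p$, and pass to the limit in the defining inequality — hence compact.

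For (ii), I would invoke the quoted uniqueness assertion: for a.e. $p \in \partial v(\Omega)$ there is a unique $x \in \Omega$ with $p \in \partial v(x)$. If $A \cap B = \emptyset$, then every $p \in \partial v(A) \cap \partial v(B)$ has at least two preimages, hence lies in the exceptional Lebesgue-null set. For (iii), observe that the graph $\mathcal{G} := \{(x, p) \in \Omega \times \mathbb{R}^{2n} : p \in \partial v(x)\}$ is closed by the same limit argument used in (i); hence for Borel $A \subseteq \Omega$ the set $\partial v(A)$ is the projection onto the second factor of the Borel set $\mathcal{G} \cap (A \times \mathbb{R}^{2n})$, which is an analytic, and therefore Lebesgue measurable, set. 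Step (iv) then follows by combining (ii) and (iii): for any disjoint sequence of Borel sets $\{A_k\}$ in $\Omega$,
$$Mv\Bigl(\bigcup_k A_k\Bigr) = \lambda^{2n}\Bigl(\bigcup_k \partial v(A_k)\Bigr) = \sum_k \lambda^{2n}(\partial v(A_k)) = \sum_k Mv(A_k).$$

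The main obstacle is step (iii): ensuring that $\partial v(A)$ lies in the domain of Lebesgue measure for an arbitrary Borel $A$. The closed-graph observation resolves this via analyticity; alternatively one can construct $Mv$ first as a Radon-type set function on compact sets using (i) and (ii), extend by Carath\'eodory's theorem, and then verify agreement with the geometric formula on open, and subsequently arbitrary Borel, sets by inner/outer regularity.
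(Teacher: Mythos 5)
The paper itself does not prove this statement; it quotes it as a classical fact and points the reader to Section~1.1 of Guti\'errez~\cite{Gu01}, where it appears as Theorem~1.1.13. Measured against that reference, your argument is correct, and steps (i), (ii) and (iv) are exactly the textbook ones: local Lipschitzness bounds the gradient image of a compact set, the a.e.\ uniqueness lemma makes gradient images of disjoint sets overlap in a null set, and the two combine to give $\sigma$-additivity. The one place you take a genuinely different route is the measurability step (iii). Guti\'errez avoids descriptive set theory entirely: he takes $\mathcal{S}$ to be the collection of $E\subseteq\Omega$ for which $\partial v(E)$ is Lebesgue measurable, notes that $\mathcal{S}$ contains all compacts (since $\partial v(K)$ is compact, as you show) and hence $\Omega$ by exhaustion, is trivially closed under countable unions, and is closed under complements because
$$\partial v(\Omega\setminus E)=\bigl(\partial v(\Omega)\setminus\partial v(E)\bigr)\cup\bigl(\partial v(\Omega\setminus E)\cap\partial v(E)\bigr)$$
with the second set Lebesgue-null by the a.e.\ uniqueness lemma, i.e.\ your (ii). Thus $\mathcal{S}$ is a $\sigma$-algebra containing all Borel sets, and one never needs analytic sets. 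Your closed-graph/projection argument is equally valid --- projections of Borel sets are Suslin and hence universally measurable --- but it imports Lusin's theorem where the null-overlap lemma already suffices, so it is heavier machinery for the same conclusion. The Carath\'eodory-extension alternative you sketch at the end would also work, though it requires an extra check that the extension agrees with $\lambda^{2n}(\partial v(\cdot))$ on arbitrary Borel sets and not merely on the sets measurable for the generated outer measure.
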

  
  The real Monge-Amp\`ere measure can also be defined, still for a convex function $v$, through analytic methods - see \cite{RT77}. A simple but
  fundamental observation - Proposition 3.4 in \cite{RT77}, states that both constructions are in fact equivalent:
  \begin{theorem}\label{RT}
  Let $v$ be a convex function defined in a domain $\Omega$. Then the Alexandrov and the weak Monge-Amp\`ere measures of $v$ agree on Borel subsets of $\Omega$. 
  \end{theorem}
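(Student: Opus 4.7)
The plan is to reduce the statement to the smooth case via mollification. Pick a standard radial mollifier $\rho_\epsilon$ and set $v_\epsilon := v * \rho_\epsilon$; each $v_\epsilon$ is $C^\infty$ and convex on slightly shrunken subdomains, and $v_\epsilon \searrow v$ locally uniformly as $\epsilon\to 0^+$. If one can establish the equality of the two Monge-Amp\`ere measures for smooth convex functions and then pass to the limit on both sides, the theorem follows.

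For the smooth step, note that on any open set where $v$ is strictly convex and $C^2$, the gradient $\nabla v$ is a local diffeomorphism, and the change-of-variables formula gives
$$\lambda^{2n}(\partial v(A)) \;=\; \lambda^{2n}(\nabla v(A)) \;=\; \int_A \det D^2 v\, d\lambda^{2n}$$
for Borel $A \subset\subset \Omega$. The right-hand side is exactly the weak Monge-Amp\`ere measure in the classical sense, since for $C^2$ data the distributional definition reduces to $\det D^2 v\, d\lambda^{2n}$. If $v \in C^2$ is convex but only semidefinite, Sard's lemma combined with the fact that $\partial v(\{\det D^2 v = 0\})$ is a $\lambda^{2n}$-null set handles the degenerate locus, so the two measures still agree.

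Passing to the limit: on the weak side, $M_w v_\epsilon \to M_w v$ as Radon measures by the standard continuity property of the weak Monge-Amp\`ere operator along locally uniformly convergent sequences of convex functions. On the Alexandrov side, one uses the closed-graph/upper-semicontinuity property of subdifferentials of convex functions: whenever $(x_\epsilon, p_\epsilon)$ with $p_\epsilon \in \partial v_\epsilon(x_\epsilon)$ satisfies $x_\epsilon \to x \in \Omega$ and $p_\epsilon \to p$, one has $p \in \partial v(x)$. Together with the monotonicity $v_\epsilon \searrow v$, this yields $\lambda^{2n}(\partial v_\epsilon(K)) \to \lambda^{2n}(\partial v(K))$ for every compact $K \subset \Omega$. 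Since both limiting measures are Radon and agree on compact sets, they coincide on all Borel subsets.

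The main obstacle is precisely the convergence of the gradient images: one must verify that no mass is lost or gained when taking the set-theoretic limit of $\partial v_\epsilon(K)$, even though for smooth $v_\epsilon$ the subdifferential is a single point while for general convex $v$ it may be a convex set of positive dimension. The resolution uses that the set of points $x \in \Omega$ at which $\partial v(x)$ is not a singleton is $\lambda^{2n}$-null in the target (classical Rademacher/Alexandrov-type result), so the multi-valued boundary of the subdifferential contributes nothing to $\lambda^{2n}(\partial v(K))$, and the desired convergence follows.
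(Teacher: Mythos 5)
The paper offers no proof of Theorem~\ref{RT}; it simply cites Proposition~3.4 of Rauch--Taylor~\cite{RT77}. Your overall strategy---mollify $v$, identify the two measures for smooth convex functions via the area formula (with a Sard-type argument on the degenerate locus), then pass to the limit on both sides---is the standard route and, in outline, is the right one.

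There is, however, a genuine error in the limiting step. The asserted setwise convergence $\lambda^{2n}\bigl(\partial v_\epsilon(K)\bigr)\to\lambda^{2n}\bigl(\partial v(K)\bigr)$ for \emph{every} compact $K\subset\Omega$ is false. Already in one real variable: take $v(x)=|x|$ on $(-1,1)$ and mollifications $v_\epsilon=v*\rho_\epsilon$ with $\rho_\epsilon$ symmetric; then $v_\epsilon'(0)=0$, so $\partial v_\epsilon(\{0\})=\{0\}$ has measure zero for every $\epsilon>0$, while $\partial v(\{0\})=[-1,1]$ has measure~$2$. Local uniform convergence of convex functions yields only upper semicontinuity of the Alexandrov measure on compact sets and lower semicontinuity on open sets, i.e.\ weak-$*$ convergence $Mv_\epsilon\to Mv$ of Radon measures (Lemma~1.2.3 in~\cite{Gu01}); it does not give convergence along a fixed compact set. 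The null-set facts you invoke at the end (almost-everywhere differentiability of $v$, and that gradients attained at more than one point form a null set in the target) are both true but do not repair the setwise claim, as the example makes plain: the whole segment $[-1,1]$ appears in $\partial v(\{0\})$ only in the limit. The correct fix is cheap: replace the setwise statement by weak-$*$ convergence of both $Mv_\epsilon$ and of the analytic measures $M_w v_\epsilon$ (the latter being the standard stability of the weak Monge--Amp\`ere operator under local uniform convergence of convex functions), and then conclude $Mv=M_w v$ from uniqueness of weak-$*$ limits of Radon measures. With that replacement the argument is sound.
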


  Let $U$ be a fixed bounded  domain and $u$ be a lower semicontinuous real valued function on $U$, which is bounded below on $U$, and such that \begin{equation}\label{compat}\displaystyle{\liminf_{ U\ni z\to w}u(z)\geq 0}, \text{ for any } w\in\partial U.\end{equation} Fix a ball $B_d$ of radius $d$ such that $U\subset\subset B_{d}$ and let $B_{2d}$ be a concentric ball of radius $2d$.
 Denote by $\Gamma_u$ the \emph{ convex envelope} of $u$ defined as follows:   We extend $\min\{u,0\}=-u^{-}$ by zero from $U$ to $B_{2d}$   and call this extension $\tilde{u}$. Also
 \begin{equation}
 \label{envelope}
 \Ga_u(x)=\Ga_{u,B_{d}}(x):=\sup\lbr l(x):\ l \quad{\rm is\quad affine}, l\leq \tilde{u}\ {\rm in }\ B_{2d}\rbr, x\in B_{2d},\end{equation}
 and $C_u=C_{u,B_{d}}:=\lbr\Ga_u=\tilde u\rbr$ is the so-called \emph{ contact set} of $u$. Note that in \cite{Wan12} there is a typo
 in Definition $4$, seeming to imply that $l\leq \tilde{u}\ {\rm only\ in }\ U$, not in $B_{2d}$.    Unless $\tilde u=0$,
 we have  $C_{u}\subseteq U$ and $C_{u}=\{\Ga_u= u\}$ and we will assume this from now on. 
 
  Usually some extra assumptions such as continuity (\cite{GT01},\cite{CC95}) are made on $u$, but just lower
 semicontinuity  is needed to ensure that the contact set is closed. Note  that condition \eqref{compat} guarantees that $\tilde u$ is lower semicontinuous, whenever $u$ is.  The function $\Ga_u$ is convex, hence continuous, and the supremum in \eqref{envelope} is attained
 at every point, since graphs of convex functions allow supporting hyperplanes at every point. Even if $u$ is convex in
 $U$ then $u\neq\Ga_u$ and $C_{u}\neq U$, unless $u\equiv0$. 
 
  For lower semicontinuous functions $u$ such that ${\displaystyle\liminf_{U\ni z\to w}u(z)}$ is negative for some $w\in\partial U$, we first extend $u$ as a lower semicontinuous function on $\bar U$, which we also denote by $u$. This is done  by setting ${\displaystyle u(w)=\liminf_{U\ni z\to w}u(z)}$ for $w\in \partial U$. Next we define $\Gamma_{u}$ and $C_{u}$ as ${\displaystyle\Gamma_{u+\sup_{\pa U} u_{}^-}}$ and ${\displaystyle\{ \Gamma_{u+\sup_{\pa U} u_{}^-}=\widetilde{u+\sup_{\pa U} u_{}^-}\}}$ respectively. Note that the estimate we want to prove is not completely invariant with respect to adding constants to $u$, since the contact set may change, but choosing so will give us the sharpest form of the estimate.
  
   Clearly $u+\sup_{\pa U}{u}_{}^-$ satisfies the condition \eqref{compat}. Of course 
  $(u+\sup_{\pa U}{u}^-)^-\neq u^--\sup_{\pa U}{u}^-$, but \begin{equation}\label{supp}\sup_{U}(u+\sup_{\pa U}{u}^-)^-
  =\sup_{U}( u^--\sup_{\pa U}{u}^-)= \sup_{U}u^--\sup_{\pa U}{u}^-.\end{equation}
   
   If $u\in PSH(U)$ then $u_{*}$ is lower semicontinuous on $\bar U$ and $\sup_{U} u^{-}=\sup_{U} u_{*}^{-}$. This is not true for $\sup_{\pa U} u^{-}$ and $\sup_{\pa U} u_{*}^{-}$, as shown by Example \ref{ex}.
   
  The following two lemmas are well-known to the experts - see Lemma 1.4.4 in \cite{Gu01}, where a continuous version is proven. We include a sketch for the sake of completeness:
  \begin{lemma}\label{gutierrez}
  Let $v$ be a lower semicontinuous function on the closure of a bounded domain $\Omega$ contained in a ball $B_{d}$. Let also  $v\geq 0$ on $\partial{\Omega}$ while $v(x_0)<0$ for some $x_0\in\Omega$. Define
  $$V(x_0):=\lbrace q\in\mathbb R^{2n}: v(x_0)+\langle q,\xi-x_0\rangle<0,\ \forall \xi\in\overline{B}_{2d}\rbrace.$$
  Then
  $$V(x_0)\subseteq \partial \Gamma_v(\lbrace  \Gamma_v=\tilde{v}\rbrace).$$

  \end{lemma}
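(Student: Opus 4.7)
The plan is to slide an affine function of slope $q$ upwards from below until it first touches $\tilde v$ somewhere in $\bar B_{2d}$; that first contact point will automatically lie in $\Omega$, will belong to $\lbrace\Gamma_v=\tilde v\rbrace$, and will admit $q$ as a subgradient of $\Gamma_v$.

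Fix $q\in V(x_0)$ and, for $t\in\mathbb R$, consider the family of affine functions $\ell_t(\xi):=t+\langle q,\xi-x_0\rangle$. Since $v$ is lower semicontinuous on the compact set $\bar\Omega$ it attains its infimum there, so $\tilde v$ is bounded below on $\bar B_{2d}$; as $\langle q,\cdot-x_0\rangle$ is bounded on $\bar B_{2d}$, one has $\ell_t\leq\tilde v$ everywhere for $t$ sufficiently negative. On the other hand $\ell_t(x_0)=t$, so any admissible $t$ must satisfy $t\leq\tilde v(x_0)=v(x_0)$. Consequently
\[
t^{\star}:=\sup\lbrace t\in\mathbb R:\ \ell_t\leq\tilde v\ \text{on}\ \bar B_{2d}\rbrace
\]
is a real number bounded above by $v(x_0)<0$.

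The lower semicontinuity of $\tilde v$ implies that the admissible set of $t$'s is closed, so $\ell_{t^{\star}}\leq\tilde v$ on $\bar B_{2d}$. Moreover, maximality of $t^{\star}$ forces the existence of a contact point $x^{\star}\in\bar B_{2d}$ with $\ell_{t^{\star}}(x^{\star})=\tilde v(x^{\star})$: otherwise the lower semicontinuous and bounded below function $\tilde v-\ell_{t^{\star}}$ would attain a strictly positive infimum on the compact set $\bar B_{2d}$, and one could raise $t^{\star}$ by that much. Here the defining strict inequality in $V(x_0)$ enters decisively: it gives $\ell_{v(x_0)}<0$ on $\bar B_{2d}$, and the bound $t^{\star}\leq v(x_0)$ then yields $\ell_{t^{\star}}\leq\ell_{v(x_0)}<0$ throughout $\bar B_{2d}$. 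Since $\tilde v\equiv 0$ on $B_{2d}\setminus\Omega$ by construction, the contact cannot take place there, and thus $x^{\star}\in\Omega$.

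It remains to observe that $\ell_{t^{\star}}\leq\Gamma_v$ everywhere by the defining supremum formula \eqref{envelope}, while $\Gamma_v\leq\tilde v$ always. Evaluating at $x^{\star}$ sandwiches $\Gamma_v(x^{\star})=\tilde v(x^{\star})=\ell_{t^{\star}}(x^{\star})$, so $x^{\star}\in\lbrace\Gamma_v=\tilde v\rbrace$, and the inequality $\Gamma_v(\xi)\geq\ell_{t^{\star}}(\xi)=\Gamma_v(x^{\star})+\langle q,\xi-x^{\star}\rangle$ identifies $q$ as an element of $\partial\Gamma_v(x^{\star})$. The only subtlety compared to the textbook continuous argument in Gutierrez is the loss of continuity of $v$: lower semicontinuity of $\tilde v$ is used twice in essential ways, once to close the admissible set of $t$'s and once to produce the contact point; the strict inequality in the definition of $V(x_0)$ is precisely what confines the contact to the interior region $\Omega$.
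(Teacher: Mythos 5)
Your proof is correct and follows essentially the same route as the paper's: slide the affine function of slope $q$ up to the critical level $\lambda_0=t^\star$, use lower semicontinuity on the compact set $\bar B_{2d}$ to produce a contact point, use the strict inequality in the definition of $V(x_0)$ together with $t^\star\le v(x_0)$ to force the contact point into $\Omega$, and then read off membership in the contact set and in the gradient image. You spell out a couple of small steps (nonemptiness of the admissible set of $t$'s, the sandwich $\Gamma_v(x^\star)=\tilde v(x^\star)=\ell_{t^\star}(x^\star)$) that the paper leaves implicit, but the substance is identical.
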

  
  \begin{proof}
   Assume that the vector $q$ belongs to $V(x_0)$. Note that the supremum \newline
   $\lambda_0:=\sup\lbrace \lambda: \lambda+\langle q,\xi-x_0\rangle\leq \tilde{v}(\xi), \forall \xi\in\overline{B}_{2d}\rbrace$
   is attained as $\tilde{v}(\xi)-\langle q,\xi-x_0\rangle$ is lower semicontinuous. Then $\lambda_0\leq v(x_0)<0$ as the evaluation at $x_0$ shows.
   Furthermore still by the lower semicontinuity of $\tilde{v}$ there exists a point $\hat{\xi}\in\overline{B}_{2d}$, such that
   $\tilde{v}(\hat{\xi})=\lambda_0+\langle q,\hat{\xi}-x_0\rangle$. As $q\in V(x_0)$ we have that 
   $\tilde{v}(\hat{\xi})<0$ and $\tilde{v}=0$ on $\overline{B}_{2d}\setminus\Omega$ now implies that $\hat{\xi}\in \Omega$. But then
   $\hat{\xi}\in\lbrace \Gamma_v=\tilde{v}\rbrace$ and finally $q\in\partial \Gamma_v(\lbrace \Gamma_v=\tilde{v}\rbrace)$, as claimed.
  \end{proof}
  
  The lemma implies that $V(x_0)\subseteq \pa\Gamma_v(\lbrace\Gamma_{v}=\tilde{v}\rbrace)$. On the other hand it is easy to see that
  the ball $B_{\frac{-v(x_0)}{2d}}\left(0 \right)$ is contained in $V(x_0)$, hence
  
  \begin{equation}\label{Gut2}
\omega_{2n}\frac{(-v(x_0))^{2n}}{(2d)^{2n}} \leq \lambda^{2n}\left(V(x_0)\right)\leq \lambda^{2n}\left(\pa\Gamma_v(\lbrace\Gamma_{v}=\tilde{v}\rbrace)\right).
  \end{equation}

 As a corollary we obtain the following weak Alexandrov maximum principle (compare with Theorem 1.4.5 in \cite{Gu01}):
 \begin{lemma}\label{145}
Let $v$ be a lower semicontinuous function on the closure of a bounded domain $\Omega$. Then
$$\sup_{\Omega}(v^{-})\leq \sup_{\partial \Omega}(v^{-})+C\ diam(\Omega)\ \lambda^{2n}\left(\pa\Gamma_v(\lbrace\Gamma_{v}=\tilde{v}\rbrace)\right)^{\frac1{2n}}.$$
 \end{lemma}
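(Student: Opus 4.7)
The plan is to reduce to the situation of Lemma \ref{gutierrez} (i.e.\ $v\geq 0$ on $\partial\Omega$) by a shift, and then apply the inequality \eqref{Gut2} at the point where the shifted function attains its minimum.

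First, if $\sup_{\Omega}v^{-}\leq \sup_{\partial\Omega}v^{-}$ the inequality is trivial, so we may assume strict inequality. Set $w:=v+\sup_{\partial\Omega}v^{-}$. By the paper's definition of $\Gamma_{v}$ and $\{\Gamma_{v}=\tilde v\}$ via the shifted function, we have $\Gamma_{w}=\Gamma_{v}$ and $\{\Gamma_{w}=\tilde w\}=\{\Gamma_{v}=\tilde v\}$. Moreover $w$ is lower semicontinuous on $\overline{\Omega}$, satisfies $w\geq 0$ on $\partial\Omega$, and by \eqref{supp} one has
$$\sup_{\Omega}w^{-}=\sup_{\Omega}v^{-}-\sup_{\partial\Omega}v^{-}>0.$$

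Next, lower semicontinuity of $w$ on the compact set $\overline{\Omega}$ guarantees that $w$ attains its infimum at some point $x_{0}\in\overline{\Omega}$. Since $\inf_{\overline{\Omega}}w<0$ while $w\geq 0$ on $\partial\Omega$, the point $x_{0}$ lies in $\Omega$ and $w(x_{0})=-\sup_{\Omega}w^{-}$. Enclose $\Omega$ in a ball $B_{d}$ centred at $x_{0}$ with $d\leq diam(\Omega)$. Now apply \eqref{Gut2} with $w$ in place of $v$ at the point $x_{0}$:
$$\omega_{2n}\frac{(-w(x_{0}))^{2n}}{(2d)^{2n}}\leq \lambda^{2n}\bigl(\partial\Gamma_{w}(\{\Gamma_{w}=\tilde w\})\bigr).$$
Taking the $2n$-th root, using $d\leq diam(\Omega)$, and substituting $-w(x_{0})=\sup_{\Omega}v^{-}-\sup_{\partial\Omega}v^{-}$ together with the identification $\{\Gamma_{w}=\tilde w\}=\{\Gamma_{v}=\tilde v\}$, we obtain
$$\sup_{\Omega}v^{-}-\sup_{\partial\Omega}v^{-}\leq \frac{2}{\omega_{2n}^{1/(2n)}}\,diam(\Omega)\,\lambda^{2n}\bigl(\partial\Gamma_{v}(\{\Gamma_{v}=\tilde v\})\bigr)^{\frac{1}{2n}},$$
which is the desired estimate with $C=2/\omega_{2n}^{1/(2n)}$.

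The only slightly delicate point is the attainment of the infimum, which is why lower semicontinuity of $v$ on $\overline{\Omega}$ is essential; it also justifies the preliminary extension ${\displaystyle u(w)=\liminf_{U\ni z\to w}u(z)}$ at boundary points described just before the statement. Apart from this the proof is a direct packaging of \eqref{Gut2}, with the shift by $\sup_{\partial\Omega}v^{-}$ serving only to put the hypotheses of Lemma \ref{gutierrez} in force without altering the contact set.
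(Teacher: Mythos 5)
Your proof is correct and follows exactly the route the paper intends: the paper does not spell out a separate argument for Lemma~\ref{145} but presents it as a direct consequence of Lemma~\ref{gutierrez} and the inequality \eqref{Gut2}, which is precisely how you derive it (shift by $\sup_{\partial\Omega}v^-$ to restore \eqref{compat}, use lower semicontinuity to locate a minimizer $x_0\in\Omega$, and apply \eqref{Gut2} at $x_0$). The only pedantic point is that the paper's setup requires $\Omega\subset\subset B_d$, so one should take $d=diam(\Omega)+\varepsilon$ centred at $x_0$ and let $\varepsilon\to0^+$ rather than take $d\leq diam(\Omega)$ literally; this changes nothing in substance.
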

As a result, the Alexandrov-Bakelman-Pucci estimate boils down to establishing a bound on the volume of the gradient image of the contact set. In \cite{Wan12}
this is done by exploiting the fact that for continuous plurisubharmonic $u$ and continuous right hand side $f$, the function
$\Gamma_u$ is a viscosity supersolution to
$$(dd^cu)^n=f^2\chi_{\lbrace \Gamma_{u}=\tilde{u}\rbrace}.$$
 
  In the viscosity approach one would look for a  lower differential tests at points of the contact set.
In our setting no viscosity tools are available since the right hand side is merely
measurable.

Instead we shall construct a different function in the following crucial lemma:
\begin{lemma}\label{convexf}
 Let $u$ be a bounded plurisubharmonic function in a domain $\Omega$ such that $(dd^cu)^n\leq f$ as measures for some $f\in L^2(\Omega)$. Let then
 $\Gamma_{u_*}$ be the convex envelope of $\widetilde{\, u_{*}}$ and  $\displaystyle{z_0\in \lbrace \Gamma_{u_*}=\widetilde{\, u_{*}}\rbrace}$. Fix small positive $r<dist(z_{0},\partial\Omega)$. Let finally the {\it convex}
 function $v$ solve the real Monge-Amp\`ere Dirichlet problem
 \begin{equation}\label{realma}
  \begin{cases}
   v\in C(\overline{B_r(z_0)});\\
   \det D^2v=\frac{f^2}{4^n(n!)^2};\\
   v|_{\partial B_r(z_0)}=\Gamma_{u_*}.
  \end{cases}
 \end{equation}
Then $v\leq \Gamma_{u_*}$ in $B_r(z_0)$.
\end{lemma}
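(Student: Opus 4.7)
The plan is to combine the Bedford--Taylor comparison principle for complex Monge--Amp\`ere with the characterization of $\Gamma_{u_*}$ as the largest convex minorant of $\widetilde{u_*}$ on $B_{2d}$. Since $v$ is convex it is plurisubharmonic, and the pointwise AM--GM inequality $(dd^c w)^n\ge 2^n n!(\det D^2 w)^{1/2}\,d\lambda^{2n}$ for convex $w$ (comparing eigenvalues of the real Hessian to those of the Hermitian Hessian), combined with $\det D^2 v=f^2/(4^n(n!)^2)$, gives
\[
(dd^c v)^n\;\ge\;f\,d\lambda^{2n}\;\ge\;(dd^c u)^n \quad\text{on } B_r(z_0).
\]

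On $\partial B_r(z_0)$ one has $v=\Gamma_{u_*}\le\widetilde{u_*}\le u_*\le u$; since $v$ is continuous while $u$ is upper semicontinuous, $\liminf_{z\to\partial B_r(z_0)}(u(z)-v(z))\ge u_*-v\ge 0$. The Bedford--Taylor comparison principle \cite{BT82} applied to the bounded plurisubharmonic functions $v$ and $u$ on $B_r(z_0)$ then gives $v\le u$ in $B_r(z_0)$, which by continuity of $v$ upgrades to $v\le u_*$. The maximum principle for convex functions together with the non-positive boundary values of $v$ forces $v\le 0$, hence $v\le\min(u_*,0)=\widetilde{u_*}$ on $B_r(z_0)$.

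It remains to upgrade $v\le\widetilde{u_*}$ to $v\le\Gamma_{u_*}$, and this is the technical heart of the argument. I would carry out a face-by-face analysis of $\Gamma_{u_*}$: each non-contact point $z\in B_r(z_0)$ lies in the relative interior of a maximal affine face $F_z$ of $\Gamma_{u_*}$, and a standard property of convex envelopes asserts that the extreme points of $F_z$ lie in the contact set $C=\{\Gamma_{u_*}=\widetilde{u_*}\}$. Proceeding by induction on the dimension of faces $F$ meeting $B_r(z_0)$, on each $F\cap B_r(z_0)$ the convex function $v-\Gamma_{u_*}|_F$ has non-positive boundary values --- coming from $v=\Gamma_{u_*}$ on $\partial B_r(z_0)$, from the inductive bound on lower-dimensional sub-faces, and from $v\le\widetilde{u_*}=\Gamma_{u_*}$ at contact vertices --- so the max principle for convex functions forces $v\le\Gamma_{u_*}$ on $F\cap B_r(z_0)$, and collecting over all faces gives $v\le\Gamma_{u_*}$ throughout $B_r(z_0)$. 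The main obstacle is this last step: the face-by-face induction rests on the non-trivial structural fact that extreme points of faces of a convex envelope are contact points, and in the complex setting one must be careful since the face geometry need not be polytopal.
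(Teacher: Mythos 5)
Your steps up to establishing $v\le\widetilde{u_*}$ on $B_r(z_0)$ essentially reproduce the paper's argument: you verify $(dd^c v)^n\ge f$ via B\l ocki's comparison of the real and complex Hessians of a convex function, apply the Bedford--Taylor comparison principle to get $v\le u$, upgrade to $v\le u_*$ by continuity of $v$, and invoke the maximum principle for convex functions to get $v\le 0$. (One small omission: the Hessian inequality $4^n n!\det(v_{i\bar j})\ge 2^n n!\sqrt{\det D^2 v}$ is pointwise for \emph{smooth} convex functions, so one must approximate the possibly singular Aleksandrov solution $v$ by smooth convex mollifications and pass to the limit in the sense of measures; the paper does this explicitly.)

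The genuine problem is your last step. You propose a face-by-face induction on the affine facets of $\Gamma_{u_*}$, resting on the claim that extreme points of such faces lie in the contact set. This is not correct as stated: the Carath\'eodory-type decomposition for a convex envelope over $B_{2d}$ only yields that the relevant extreme points lie either in the contact set \emph{or on $\partial B_{2d}$}, and at the latter points one has no control on $\Gamma_{u_*}$ in terms of $\widetilde{u_*}$. Moreover the collection of faces of a convex function need not be locally finite or polytopal, and the relative boundary of a face need not be a union of lower-dimensional faces, so the proposed induction on face dimension does not set up cleanly. You flag this yourself as ``the main obstacle,'' and indeed as written the argument is incomplete at precisely the step the paper also glosses over.

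The gap can be closed without any face analysis. Suppose $v(p)>\Gamma_{u_*}(p)$ for some $p\in B_r(z_0)$, and let $m$ be an affine support of $v$ at $p$, so $m\le v$ on $\overline{B_r(z_0)}$ and $m(p)=v(p)$. Then $m\le v=\Gamma_{u_*}$ on $\partial B_r(z_0)$, so the set $\{m>\Gamma_{u_*}\}$, which is convex because $\Gamma_{u_*}-m$ is convex, contains $p$ and is disjoint from the sphere $\partial B_r(z_0)$; hence it is contained in $B_r(z_0)$. Consequently $m\le\Gamma_{u_*}\le\widetilde{u_*}$ on $B_{2d}\setminus B_r(z_0)$ and $m\le v\le\widetilde{u_*}$ on $B_r(z_0)$, so $m$ is one of the affine competitors in the definition of $\Gamma_{u_*}$, giving $m(p)\le\Gamma_{u_*}(p)$, a contradiction. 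This single supporting-hyperplane argument is what is implicitly used in the paper's final assertion, and it replaces the entire face decomposition you were attempting.
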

\begin{proof} Note that $v$ need not agree with $u_{*}$ at $z_0$.  
 Observe that if $v$ was additionally smooth
 then $(dd^cv)^n=4^nn!\det(v_{i\bar{j}})\geq2^nn!\sqrt{\det D^2 v}\geq f$ from a comparison result of the real and complex Hessians  of  a convex function - see \cite{Blo05}.
 But a possibly singular $v$ is locally a uniform limit of smooth convex approximants $v_j$ (standard convolutions with smoothing kernel would do), and passing to the limit
 we obtain $(dd^cv)^n\geq f$ as measures for {\it any} such convex solution $v$.
 
 Next, $\Gamma_{u_*}\leq u_*\leq u$ together with $(dd^cu)^n\leq f$ gives that $v\leq u$ in $B_r(z_0)$, by the comparison principle for plurisubharmonic functions (see \cite{BT82} or \cite{Kol05}). But now $v$ is continuous, hence $v\leq u_*$. Note also that
 $v|_{\partial B_r(z_0)}\leq 0$, hence $v$  is non-positive in the interior of the ball. Thus $v\leq \widetilde{\, u_*}$ and  finally
 $v\leq \Gamma_{u_*}$.
\end{proof}

The main theorem now follows in the following way: As $v\leq \Gamma_{u_*}$ on $B_r(z_0)$ with equality on the boundary, 
$\partial \Gamma_{u_*}(B_r(z_0))\subseteq\partial v(B_r(z_0))$, by Lemma 1.4.1 in \cite{Gu01}. Hence
$$\lambda^{2n}\left(\partial \Gamma_{u_*}(B_r(z_0))\right)\leq \lambda^{2n}\left(\partial v(B_r(z_0))\right)=\int_{B_r(z_0)}\frac{f^2}{4^n(n!)^2}, $$
where we used Theorem \ref{RT} to justify the last equality. 

In particular this means that the Alexandrov measure of $\Gamma_{u_*}$ restricted to the contact set is majorized by $\frac{f^2}{4^n(n!)^2}$.
As a result 
$$\lambda^{2n}\left(\pa \Gamma_{u_*}(\lbrace \Gamma_{u_*}=\widetilde{\, u_*}\rbrace)\right)^{\frac1{2n}}\leq\frac{1}{2\sqrt[n]{n!}}\left(\int_{\lbrace \Gamma_{u_*}=\widetilde{u_*}\rbrace}f^2\right)^{\frac 1{2n}},$$
and coupling this with Lemma \ref{145} applied for $v=u_*$ the main result follows.

\section{Applications and remarks}
\begin{remark}
	The a priori assumption of boundedness on $u$ can not be dropped, since it is not true  that $f\in L^2(\Omega)$ yields $u\in L^{\infty}$, as the example of a pluricomplex Green function $u$ on $\Omega_{1}$ shows, where $\Omega=\Omega_{1}\setminus \{w\}$, with $w$ being the pole of $u$. For unbounded $u$, the notion of convex contact set is no longer meaningful, at least if one keeps the standard definition.   
\end{remark}
\begin{remark}\label{optimalconstant}
	Following our proof carefully, we get that the constant in \eqref{mainn} can be taken as $C=\frac{1}{2\sqrt{\pi}\sqrt[2n]{n!}}$. One can not get a smaller constant as the following example shows. 	
	 Take $u(z)$ such that $u$ is plurisubharmonic, $u\geq 0$ on $\partial\Omega$, where $\Omega=B_{d}(0)$, $u(z)>\frac{a}{\sqrt{4d^2-a^2}}(\|z\|-2d)$ on $B_{d}(0)\setminus\overline{B_{\frac{a^2}{2d}}(0)}$, and  $u(z)=-\sqrt{a^2-\|z\|^2}$ on a neighborhood of $\overline{B_{\frac{a^2}{2d}}(0)}$, for some $0<a\leq d$.  The contact set is $B_{\frac{a^2}{2d}}(0)$ and   $f=(dd^cu)^n=4^nn!\det(u_{i\bar j})=4^nn!\frac{1}{2^{n+1}}\frac{2a^2-\|z\|^2}{(a^2-\|z\|^2)^{\frac{n+2}{2}}}=2^{n-1}n!\frac{2a^2-\|z\|^2}{(a^2-\|z\|^2)^{\frac{n+2}{2}}}$ there. The integral of $f^2$ over the contact set is a complicated expression, fortunately the real Hessian  of $u$, which is $\det D^2 u=\frac{a^2}{(a^2-\|z\|^2)^{n+1}}$, is both comparable and easily explicitly integrable  there. We have
	 $$1\leq \frac{f^2}{4^n(n!)^2\det D^2 u }\leq 1+\frac{a^4}{64d^4-16a^2d^2}$$
	 on the contact set, so \eqref{mainn}  yields
	  
	  $$a\leq 2d C\sqrt[2n]{\int_{B_{\frac{a^2}{2d}}(0)}f^2}\leq2d C\sqrt[2n]{\left(1+\frac{a^4}{64d^4-16a^2d^2}\right)\int_{B_{\frac{a^2}{2d}}(0)}4^n(n!)^2\det D^2 u}$$$$=2dC\sqrt[2n]{\left(1+\frac{a^4}{64d^4-16a^2d^2}\right)\int_{B_{\frac{a^2}{2d}}(0)}4^n(n!)^2 \frac{a^2}{(a^2-\|z\|^2)^{n+1}}}$$$$=2d C\sqrt[2n]{\left(1+\frac{a^4}{64d^4-16a^2d^2}\right)} 2\sqrt[2n]{n!}\sqrt{\pi}\frac{a}{\sqrt{4d^2-a^2}}.$$
	  Now letting  $a\to0^+$ proves  the claim.

	Pursuing the task of obtaining the best constant possible,  we can modify our construction by assuming 
	$\Omega \subseteq B_d\subseteq B_{d+\varepsilon}$ and taking the convex envelope with respect to $B_{d+\varepsilon}$ instead of $B_{2d}$ (the definition of contact sets changes accordingly). With a few modifications of the proof we get an ABP estimate with the constant $C=\frac{d+\varepsilon}{4d\sqrt{\pi}\sqrt[2n]{n!}}$ and the same example as above shows that it is optimal.  In the limit when $\varepsilon\to0^+$ we get a slightly better constant than would directly correspond to \eqref{alexandrov}.  
\end{remark}
\begin{remark}
	The same example demonstrates that it is not possible to obtain the ABP estimate with optimal constant while integrating over a set which is essentially smaller than the contact set.
\end{remark}
The next example shows that the exponent $2$ in \eqref{mainn} is optimal, that is, one can not substitute the $L^2$ norm of $f$ on the contact set with a $L^p$ norm for any $1\leq p<2$.
\begin{example}\label{optimalexponent} Let $\Omega$ be the unit ball in $\C$ and let $u(z)=\|z\|^{\alpha}-1\in PSH(\Omega)$, for $2>\alpha>0$. It is a matter of routine calculus to check that 
	$$(dd^c u)^n=f(z)=2^{n-1}n!\alpha^{n+1}\|z\|^{n\alpha-2n}.$$
	Switching to polar coordinates, one sees that $f\in L^p(\Omega)$, for any $1\leq p<\frac{2}{2-\alpha}$. On the other hand it can not be true, that
	
	\begin{equation*}
	\sup_{\Omega}u^{-}\leq \sup_{\pa \Omega} (u_{*})^{-}+Cdiam(\Omega)\Vert f\cdot\chi_{\{ \Gamma_{u_{*}}=\widetilde{u_{*}}\}}\Vert_{L^p(\Omega)}^{\frac{1}{n}},	\end{equation*}
	since if $\alpha\leq 1$ then $u(z)\geq \|z\|-1$ and hence $\{\Gamma_u=\tilde{u}\}$ consists of the sole origin. 
\end{example}
Let us remark that the problem of defining a correct notion of boundary values for $u\in PSH(\Omega)$ is a subtle one, as already noted in \cite{BT76}.
Interestingly, there  the authors remark that sufficiently general uniqueness
theorem for the Monge-Amp\`ere equation would imply nonexistence of nontrivial {\it inner} functions in the unit ball of $\mathbb C^n$, $n>1$. However, the existence of such functions
was later proven by Aleksandrov-\cite{A82} and Hakim-Sibony-L\o{}w-\cite{HS82,L82}. The nontrivial inner functions can in fact be used   to show 
that it is necessary to consider the lower semicontinuous regularization of plurisubharmonic functions on the boundary in our considerations:
\begin{example}\label{ex} Let $\Omega$ be the unit ball in $\mathbb C^n$, $n>1$, and let $F$ be a nontrivial holomorphic inner function on $\Omega$. As the radial limits of $F$ exist almost everywhere on $\pa \Omega$ and their absolute values are equal to $1$ again almost everywhere,
it is obvious that ${\displaystyle\limsup_{\Omega\ni z\rightarrow w\in\partial\Omega}|F(z)|^2-1=0}$ everywhere on $\pa \Omega$. It is well-known (see Theorem 19.1.3 in \cite{Rud08}) that in turn  ${\displaystyle \liminf_{\Omega\ni z\rightarrow\partial\Omega}|F(z)|^2-1=-1}$.
Consider  the maximal plurisubharmonic function $u(z):=|F(z)|^2-1$. Then $\sup_{\Omega}u^{-}=1$, the  last term in \eqref{mainn} vanishes, and it is obvious that Alexandrov-Bakelman-Pucci type estimate
is   possible only if $\liminf$-boundary values are taken into consideration.
\end{example}

\begin{remark}The same example shows the lack of uniqueness of the solution of the Dirichlet problem without boundary continuity: both $u_{1}(z)=|F(z)|^2-1$ and $u_{2}(z)=0$ are plurisubharmonic and satisfy ${\displaystyle\limsup_{\Omega\ni z\rightarrow\partial\Omega}u_{i}(z)=0}$, and $(dd^c u_{i})^n=0$. Also the global comparison principle fails in this generality.
\end{remark}

Our next example demonstrates the difference of taking the $\liminf$-boundary values only from {\it within} the considered domain as compared to the lower regularization, where approach from {\it outside} is allowed:
\begin{example}\label{ex3}
 Consider the function
$$u(z):=\sum_{n=3}^{\infty}a_n\log\left|z-\frac12-\frac1n\right|,$$
defined on the unit disc in $\mathbb C$,
where the constants $a_n>0$ are chosen so small that $u\left(\frac12\right)=\min_{\lbr|z|\leq\frac 12\rbr}u\geq-1$. Let $\lbrace\theta_j\rbrace_{j=1}^{\infty}$ be a dense sequence of angles in $[0,2\pi)$.
Define $$v(z):=\sum_{j=1}^{\infty}\frac1{2^n}u(e^{i\theta_j}z).$$
By construction $v|_{\lbrace|z|\leq\frac12\rbrace}\geq -1$, while $v_{*}|_{\partial\lbrace|z|\leq\frac12\rbrace}\equiv-\infty$.
Taking $\hat{v}(z):=e^{v(z)}$ results in a bounded subharmonic function (the boundedness from above  is clear) with constantly zero $\liminf$-boundary values. Finally note that

$$\hat{V}(z_1,z_2):=\hat{v}(z_1)$$
is a maximal plurisubharmonic function defined in, say, the unit ball in $\mathbb C^2$. Let the domain $\Omega$ be the ball centered at zero of radius $\frac12$.
Taking ${\displaystyle \liminf_{\Omega\ni z\rightarrow z_0\in\partial \Omega}\hat{V}(z)}$ rather than $\hat{V}_{*}(z_0)$ results  in a sharper Alexandrov-Bakelman-Pucci inequality.

\end{example}
Example \ref{ex3} also shows that is easy to produce discontinuous maximal plurisubharmonic functions. These are, however, not very useful in our considerations, since the Alexandrov-Bakelman-Pucci type inequality holds trivially for maximal plurisubharmonic functions.  In turn non-maximal discontinuous $PSH$ functions with
non-negative densities do not seem to be studied thoroughly in the literature. We believe that ABP type estimates in the discontinuous setting can be helpful in their study. But first of all one wants to know if such functions do exist. Hence we provide an example:

Let $K$ be a planar compact set, which is non-polar, contained in the imaginary axis $\{z: Re z=0\}$ and is irregular in the sense of potential theory (see \cite{Ran95} for these basic notions). An explicit construction is possible by choosing a sequence of intervals accumulating at $0$, with controlled lengths and suitably situated with respect to each other (see \cite{Sic97} for details). Irregularity can be established by using the Wiener criterion. Let $V_{K}^{*}$ be the extremal function associated to the set $K$ (or Green function for the complement of $K$ with pole at infinity). It is known that $V_{K}^{*}$ is positive and harmonic outside $K$, subharmonic in $\mathbb C$, and $\Delta V_{K}^{*}$ is a positive Borel measure supported on $K$. Because $K$ is irregular and non-polar, $V_{K}^{*}$ fails to be continuous.

\begin{example}\label{example} Let $\Omega\subset\subset\mathbb C^2$ be a bounded domain, contained in $\{(z,w)\in\mathbb C^2: |w|<1\}$, $K\times\{0\}\subseteq\Omega$ and $V_{K}^*$ is as above. Then 
	$$u(z,w):=V_{K}^{*}(z)+(Re z)^2(1+|w|^2)$$
is a bounded discontinuous plurisubharmonic function on $\Omega$ such that $\left(dd^c u\right)^2=f$, where $f\geq 0$ is not everywhere zero and is smooth.	
\end{example}
\begin{proof}
The discontinuity and boundedness are clear. Computing the complex Hessian, at a point outside $\{(z,w)\in\mathbb C^2: Re z=0\}$, that is, near which $u$ is $C^2$, gives one

$$\begin{pmatrix}u_{z\bar z}& u_{z\bar w}\\
u_{w\bar z}& u_{w\bar w}\end{pmatrix}=\begin{pmatrix}\frac{1}{4}\Delta V_{K}^{*}+\frac{1}{2}(1+|w|^2)& w Re z\\
 \bar w Re z& (Re z)^2\end{pmatrix},$$
so the determinant is $\frac{1}{2}(Re z)^2(1-|w|^2)$, which extends to a non-negative and smooth function on $\bar \Omega$. 
On the other hand $\left(dd^c u\right)^2$ can put no mass on $\{(z,w)\in\mathbb C^2: Re z=0\}$
since $\frac{1}{4}\Delta V_{K}^{*}$ is killed by the term $(Re z)^2$.
\end{proof}

Theorem \ref{ABPestimate} has several immediate corollaries.

\begin{corollary}\label{pota}Let $u$ be as in Theorem \ref{ABPestimate}. Suppose moreover that $f\in L^{\infty}(\Omega)$. Then for any relatively compact  subdomain $U\subseteq\Om$ the following estimate holds:
	$$\sup_{U}u^{-}\leq \sup_{\pa U} (u_{*})^{-}+Cdiam(U)Vol(U)^{1/(2n)}\Vert f\Vert_{L^{\infty}(U)}^{1/n},$$
	where $C$ is a numerical constant dependent only on the dimension $n$.
\end{corollary}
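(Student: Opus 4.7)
The plan is to apply Theorem \ref{ABPestimate} directly to the restriction of $u$ to the relatively compact subdomain $U$, and then replace the $L^2$ norm on the right-hand side with a crude $L^\infty$ bound using H\"older's inequality.

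First I would verify the hypotheses of Theorem \ref{ABPestimate} on $U$. Since $U\subset\subset\Omega$, the function $u$ remains a bounded plurisubharmonic function on $U$, and $(dd^c u)^n\leq f$ still holds as measures there. Because $U$ has finite Lebesgue measure and $f\in L^\infty(\Omega)$, we have $f\in L^2(U)$, so all the assumptions are met and we obtain
\begin{equation*}
\sup_{U}u^{-}\leq \sup_{\pa U} (u_{*})^{-}+C\,\mathrm{diam}(U)\,\bigl\Vert f\cdot\chi_{\{ \Gamma_{u_{*}}=\widetilde{u_{*}}\}}\bigr\Vert_{L^{2}(U)}^{1/n}.
\end{equation*}

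Next I would bound the $L^2$ norm pointwise by the $L^\infty$ norm: since the contact set is contained in $U$,
\begin{equation*}
\bigl\Vert f\cdot\chi_{\{ \Gamma_{u_{*}}=\widetilde{u_{*}}\}}\bigr\Vert_{L^{2}(U)}\leq \Vert f\Vert_{L^{\infty}(U)}\,\lambda^{2n}\bigl(\{ \Gamma_{u_{*}}=\widetilde{u_{*}}\}\cap U\bigr)^{1/2}\leq \Vert f\Vert_{L^{\infty}(U)}\,\mathrm{Vol}(U)^{1/2}.
\end{equation*}
Raising this to the power $1/n$ gives the factor $\mathrm{Vol}(U)^{1/(2n)}\Vert f\Vert_{L^{\infty}(U)}^{1/n}$, which combined with the previous inequality yields exactly the claimed estimate.

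There is no real obstacle here; the corollary is a clean consequence of Theorem \ref{ABPestimate} plus the trivial estimate $\|g\|_{L^2}\leq \|g\|_{L^\infty}\mathrm{Vol}^{1/2}$. The only point worth noting is that one must respect the setup of the main theorem: $U$ plays the role of $\Omega$, so the boundary values appearing in the bound are $\sup_{\partial U}(u_*)^-$ rather than $\sup_{\partial \Omega}(u_*)^-$, and the regularization $u_*$ is taken with respect to points of $\overline{U}$.
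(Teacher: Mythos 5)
Your argument is correct and coincides with the paper's own (one-line) proof: apply Theorem \ref{ABPestimate} on $U$ in place of $\Omega$ and then bound $\Vert f\chi_{\{\Gamma_{u_*}=\widetilde{u_*}\}}\Vert_{L^2(U)}$ by $\mathrm{Vol}(U)^{1/2}\Vert f\Vert_{L^\infty(U)}$. Your remark about the boundary values and $u_*$ being taken relative to $U$ is a sensible clarification but does not change the substance.
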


\begin{proof}
This follows trivially by estimating the last term in \eqref{mainn} by  $Vol(U)^{1/(2n)}\Vert f\Vert_{L^{\infty}(U)}^{1/n}$.
\end{proof}

\begin{corollary}Under the  assumptions of Theorem \ref{ABPestimate}, and if the supremum of $u^-$ is not attained on the boundary, then the contact set $\{ \Gamma_{u_{*}}=\widetilde{\, u_{*}}\}$ of the function $u$ has positive Lebesgue measure. In a sense such plurisubharmonic functions have ''pointwise convex'' lower semicontinuous regularizations (their graphs allow supporting real hyperplanes) on a big set. 
\end{corollary}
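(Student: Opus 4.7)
The strategy is to read this as a direct contrapositive application of the main Alexandrov--Bakelman--Pucci estimate \eqref{mainn}. I would interpret the phrase ``the supremum of $u^{-}$ is not attained on the boundary'' as the strict inequality
\[
\sup_{\Omega} u^{-} \;>\; \sup_{\partial \Omega}(u_{*})^{-},
\]
which is the natural boundary-value version that matches the right-hand side of Theorem \ref{ABPestimate} (and is consistent with the earlier note that $\sup_{\Omega} u^{-} = \sup_{\Omega}(u_{*})^{-}$ but these two boundary suprema may differ). Under this assumption the two terms on the right-hand side of \eqref{mainn} cannot both equal the boundary supremum, so necessarily
\[
C \, \mathrm{diam}(\Omega) \, \bigl\|f \cdot \chi_{\{\Gamma_{u_{*}} = \widetilde{u_{*}}\}}\bigr\|_{L^{2}(\Omega)}^{1/n} \;>\; 0.
\]

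From this positivity I would immediately conclude that $\int_{\{\Gamma_{u_{*}} = \widetilde{u_{*}}\}} f^{2} \, d\lambda^{2n} > 0$. Since $f \in L^{2}(\Omega)$ is a measurable function, the Lebesgue integral of $f^{2}$ over any set of Lebesgue measure zero vanishes. Hence the contact set $\{\Gamma_{u_{*}} = \widetilde{u_{*}}\}$ cannot be a Lebesgue null set, which is exactly the claim.

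There is essentially no obstacle here: the statement is a one-line corollary of Theorem \ref{ABPestimate} combined with the elementary fact that $L^{2}$ integrals over null sets vanish. The geometric afterthought in the corollary -- that on this set of positive measure the regularization $u_{*}$ admits supporting affine hyperplanes -- is just the definition of the contact set $\{\Gamma_{u_{*}} = \widetilde{u_{*}}\}$: at each point where $u_{*}$ agrees with its convex envelope, every supporting hyperplane of the convex function $\Gamma_{u_{*}}$ (which exists at every point by convexity) is automatically a supporting hyperplane of $u_{*}$ itself. No further argument is required for this remark.
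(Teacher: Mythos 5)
Your proof is correct and takes the only natural route: the paper provides no explicit proof of this corollary, treating it as an immediate consequence of Theorem \ref{ABPestimate}, and your contrapositive argument fills in exactly what the authors left implicit. The interpretation of ``the supremum of $u^{-}$ is not attained on the boundary'' as $\sup_{\Omega}u^{-}>\sup_{\partial\Omega}(u_{*})^{-}$ is the right one given the form of \eqref{mainn} and the earlier discussion that $\sup_{\Omega}u^{-}=\sup_{\Omega}(u_{*})^{-}$ while the boundary suprema of $u^{-}$ and $(u_{*})^{-}$ may differ, and the rest of the argument (null set $\Rightarrow$ zero $L^{2}$ integral $\Rightarrow$ estimate collapses to the boundary term) is watertight.
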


 Concerning Wang's problem we observe the following.
 \begin{example}\label{wanggg}The $L^2$ norm over the contact set can not control the $L^{p}$ norm over the whole domain and vice versa, which is demonstrated by the the examples below (in each case we put $(dd^c u)^n=f$ and   $(dd^c u_{\varepsilon})^n=f_{\varepsilon}$ ): \begin{enumerate}
 		\item	Let $u$ be a function of the type considered  in Remark \ref{optimalconstant}, namely $u(z)=-\sqrt{d^2-\|z\|^2}$. We have $f\not\in L^1(B_{d}(0))$, so we perturb $u$ near the boundary of the ball by setting $u_{\varepsilon}=u$ on  $B_{d-2\varepsilon}(0)$, $u_{\varepsilon}=\frac{\sqrt{d^2-(d-\varepsilon)^2}}{\varepsilon}(\|z\|-d)$   on $B_{d}(0)\setminus \overline{B_{d-\varepsilon}(0)}$ and $u_{\varepsilon}$ is extended by using a smooth transition function on $\overline{B_{d-\varepsilon}(0)\setminus B_{d-2\varepsilon}(0)}$, keeping convexity.  The contact set of $u_{\varepsilon}$ is $\overline{B_{\frac{d}{2}}(0)}$.
 		
 		Then  $\| f_{\varepsilon}\chi_{\{\Gamma_{ {u_{\varepsilon}}_{*}}=\widetilde{{u_{\varepsilon}}_{*}}\}}\|_{L^2(\Omega)}$ stays fixed, whereas $\| f_{{\varepsilon}} \|_{L^{p}(\Omega)}>\| f_{{\varepsilon}} \|_{L^{p}(B_{d-2\varepsilon}(0))}$ is arbitrarily big for any $p\geq1$.
 		
 		\item	Let $u$ be the function form Example \ref{optimalexponent} namely $u(z)=\|z\|^{\alpha}-1, 0<\alpha<1$. We construct $u_{\varepsilon}$ by first properly normalizing $u$, that is putting $\varepsilon\left(\left(\frac{\|z\|}{d}\right)^{\alpha}-1\right)$ and after that  truncating the ''tip'' of its graph in a small ball. This is done by setting $u_{\varepsilon}=-\sqrt{\varepsilon^2-\|z\|^2}$ in a neighborhood of $\overline{B_{\frac{\varepsilon^2}{2d}}(0)}$ and patching this function smoothly with the normalized $u$ over $B_{\frac{\varepsilon^2}{d}}(0)\setminus B_{\frac{\varepsilon^2}{2d}}(0)$, while keeping the plurisubharmonicity.  This is again done 
 		by using a smooth transition function. The patching is possible because $u_{\varepsilon}\left(\frac{\varepsilon^2}{d}\right)=\varepsilon\left(\left(\frac{\varepsilon^2}{d^2}\right)^{\alpha}-1\right)>-\sqrt{\varepsilon^2-\left(\frac{\varepsilon^2}{2d}\right)^2}=u_{\varepsilon}\left(\frac{\varepsilon^2}{2d}\right)$ if $\varepsilon$ is small enough. As in Remark \ref{optimalconstant}, the contact set is $\overline{B_{\frac{\varepsilon^2}{2d}}(0)}$ and
 		$$\| f_{\varepsilon}\chi_{\{\Gamma_{ {u_{\varepsilon}}_{*}}=\widetilde{{u_{\varepsilon}}_{*}}\}}\|_{L^2(\Omega)}\sim C\left(\frac{\varepsilon}{\sqrt{4d^2-\varepsilon^2}}\right)^n\to 0,$$ 
 		whereas, using Example \ref{optimalexponent},
 		
 		$$\| f_{\varepsilon}\|_{L^p(\Omega)}>\| f_{\varepsilon}\|_{L^p\left(B_{d}(0)\setminus B_{\frac{\varepsilon^2}{d}}(0)\right)}\sim C_{1}\varepsilon^n+ C_2(\varepsilon^{\frac{pn+2(n\alpha-2n)p+4n}{p}}).$$
 		Hence $\| f_{\varepsilon} \|_{L^{p}(\Omega)}$ is either bounded and separated from zero if $1\leq p<\frac{4}{3-2\alpha}$ or arbitrarily big if $ p\geq\frac{4}{3-2\alpha}$.
 		
 		\item	Let $u_{\varepsilon}$ be defined as follows. Let $u_{\varepsilon}=\log\frac{\|z\|}{d}$ on $B_{d}(0)\setminus B_{\frac{3\varepsilon}{d}}(0)$, $u_{\varepsilon}=\frac{\left|\log\frac{\varepsilon}{d}\right|}{\varepsilon}\|z\|^2-\left|\log\frac{\varepsilon}{d}\right|$ on $ B_{\frac{\varepsilon}{3d}}(0)$. Note that $u_{\varepsilon}(w)>u_{\varepsilon}(v)$ if $\|w\|=\frac{3\varepsilon}{d}$, $\|v\|=\frac{\varepsilon}{3d}$ and $\varepsilon$ is small enough. Now we extend $u_{\varepsilon}$ on $B_{\frac{3\varepsilon}{d}}(0)\setminus B_{\frac{\varepsilon}{3d}}(0)$ in such a way that $u_{\varepsilon}$ is increasing with $\|z\|$, smooth, plurisubharmonic and $f_{\varepsilon}$ is decreasing. The contact set is the closed ball of radius $2d-\sqrt{4d^2-\varepsilon}\gtrsim\frac{\varepsilon}{4d}$.
 		Now $$\| f_{\varepsilon}\chi_{\{\Gamma_{ {u_{\varepsilon}}_{*}}=\widetilde{{u_{\varepsilon}}_{*}}\}}\|_{L^2(\Omega)}>\| f_{\varepsilon}\|_{L^2\left(B_{\frac{\varepsilon}{4d}}(0)\right)}\sim C\left|\log\frac{\varepsilon}{d}\right|^{n}\to\infty,$$
 		whereas
 		$$\| f_{\varepsilon}\|_{L^p(\Omega)}=\| f_{\varepsilon}\|_{L^p\left(B_{\frac{3\varepsilon}{d}}(0)\right)}\lesssim C\varepsilon^{\frac{n(2-p)}{p}}\left|\log\frac{\varepsilon}{d}\right|^{n}\to0,$$
 		for any $1\leq p<2$.
 	\end{enumerate}
 \end{example}
 \begin{remark} Following the proof of Theorem \ref{ABPestimate}, one sees that the assumption $f\in L^{2}(\Omega)$ can be changed to just $ f\in L^{2}(\lbrace \Gamma_{u_*}=\widetilde{\, u_*}\rbrace)$ and the first of the Examples \ref{wanggg} shows that under such assumption the ABP estimate applies to a wider range of plurisubharmonic functions.
 \end{remark}

 \medskip
 {\bf Acknowledgments}
 
 The first named author was partially supported by National Science Centre, Poland, grant no. 2017/26/E/ST1/00955. The second named author was partially supported by  the  National Science Centre, Poland grant no 2013/08/A/ST1/00312.


\begin{thebibliography}{KKPS01}
\bibitem[Ale82]{A82} A.B. Aleksandrov:
The existence of inner functions in a ball (Russian),
Mat. Sb. (N.S.) {\bf118(160)} (1982), no. 2, 147-163.
	
	 \bibitem[AIM06]{AIM06} K. Astala, T. Iwaniec and G. Martin:
	 Pucci's conjecture and the Alexandrov inequality for
	 	elliptic PDEs in the plane, J. Reine Angew. Math.
 {\bf 591} (2006), 49-74.
	\bibitem[BT76]{BT76} E. Bedford and B. Taylor:
	The Dirichlet problem for a complex Monge-Amp\`ere
		equation,
Invent. Math. {\bf 37}, 1976,
	no. 1, 1-44.
	\bibitem[BT82]{BT82} E. Bedford and B. Taylor:
	A new capacity for plurisubharmonic functions,
Acta Math. {\bf149} (1982), no. 1-2, 1-40. 
\bibitem[Blo96]{Blo96} Z. B\l ocki: The complex Monge-Amp\'ere operator in hyperconvex domains, Ann. Scuola Norm. Sup. Pisa Cl. Sci. {\bf 23} (1996), no. 4, 721-747.	
\bibitem[Blo04]{Blo04}	Z. B\l ocki: On the definition of the Monge-Amp\`{e}re operator in $\mathbb C^2$, Math. Ann. {\bf 328} (2004), 415-423.
\bibitem[Blo05]{Blo05} Z. B\l ocki:  On uniform estimate in Calabi-Yau theorem, Proceedings of SCV2004, Beijing, Sci. China Ser. A Math. {\bf48} Supp. (2005), 244-247.
\bibitem[Blo06]{Blo06} Z. B\l ocki: The domain of definition of the Complex Monge-Amp\`{e}re operator, Amer. J. Math. {\bf 128} (2006), 519-530.

\bibitem[Cab95]{Cab95} X. Cabr\'{e}:
On the Alexandroff-Bakel' man-Pucci estimate and the
	reversed H\"{o}lder inequality for solutions of elliptic and
	parabolic equations, Comm. Pure Appl. Math. {\bf 48} (1995), no.5, 539-570.

\bibitem[CC95]{CC95} L. Caffarelli and X. Cabr\'{e}:
Fully nonlinear elliptic equations, American Mathematical Society Colloquium Publications, {\bf 43},
American Mathematical Society, Providence, RI, 1995, ISBN: 0-8218-0437-5,
vi+104.

\bibitem[CCKS96]{CCKS96} L. Caffarelli, M. Crandall, M. Kocan and A. Swi\k{e}ch:
On viscosity solutions of fully nonlinear equations with
	measurable ingredients,
Comm. Pure Appl. Math.
 {\bf 49},
(1996), no. 4, 365-397.

\bibitem[CP92]{CP92} U. Cegrell and L. Persson:
The Dirichlet problem for the complex Monge-Amp\`ere
	operator: stability in {$L^2$}, Michigan Math. J.  {\bf 39}, (1992), no. 1, 145-151.

\bibitem[CIL92]{CIL92} M. Crandall, H. Ishii and P. Lions: User's guide to viscosity solutions of second order partial
	differential equations, Bull. Amer. Math. Soc. (N.S.),
 {\bf 27} (1992), no. 1, 1-67.
 
 \bibitem[DD19]{DD18} S. Dinew and \.Z. Dinew: Differential Tests for Plurisubharmonic Functions and Koch Curves, to appear in Potential Anal (2018). https://doi.org/10.1007/s11118-018-9686-6.
 \bibitem[EGZ11]{EGZ11}  P. Eyssidieux, V. Guedj and A. Zeriahi: Viscosity solutions to degenerate complex
 Monge-Amp\`ere equations, Comm. Pure Appl. Math. {\bf 64} (2011), no. 8, 1059-1094.

\bibitem[GT01]{GT01} D. Gilbarg and N. Trudinger: Elliptic Partial Differential Equations of Second Order, Reprint of the 1998 edition,  Classics in Mathematics,  Springer-Verlag, Berlin, 2001, ISBN: 3-540-41160-7, xiv+517.
\bibitem[GLZ]{GLZ} V. Guedj, H.C. Lu and A. Zeriahi: Weak subsolutions to complex Monge-Amp\`ere equations, preprint  arXiv:1703.06728.
\bibitem[Gut01]{Gu01} C. Guti\'errez: The Monge-Amp\`ere Equation, Progress in Nonlinear Differential Equations
and their Applications, {\bf44}, Birkh\"auser Boston, Inc., Boston, MA, 2001, ISBN: 0-8176-4177-7, xii+127.
\bibitem[HS82]{HS82} M. Hakim and N. Sibony:
Fonctions holomorphes born\'ees sur la boule unit\'e de $\mathbb C^n$ (French), 
Invent. Math. {\bf 67} (1982), no. 2, 213-222.
\bibitem[Kol05]{Kol05} S. Ko\l odziej: The complex Monge-Amp\`ere equation and pluripotential	theory,
Mem. Amer. Math. Soc. {\bf 178} (2005), no. 840, x+64.
\bibitem[L\o{}w82]{L82} E.L\o{}w: A construction of inner functions on the unit ball in $\mathbb C^p$,
Invent. Math. {\bf 67} (1982), no. 2, 223-229. 

\bibitem[Ran95]{Ran95} T. Ransford: Potential Theory in the Complex Plane, London Mathematical Society Student Texts, {\bf 28}, Cambridge University Press, Cambridge, 1995, ISBN 0-521-46120-0; 0-521-46654-7, x+232.
 \bibitem[RT77]{RT77} J. Rauch and  B.A. Taylor:
The Dirichlet problem for the multidimensional Monge-Amp\`ere equation,
Rocky Mountain J. Math. {\bf 7} (1977), no. 2, 345-364.
\bibitem[Rud08]{Rud08} W. Rudin: Function Theory in the Unit Ball of {$\Bbb C^n$},
Classics in Mathematics,
Reprint of the 1980 edition,
Springer-Verlag, Berlin, 2008,
 ISBN 978-3-540-68272-1,
xiv+436.
\bibitem[Sic81]{Sic81} J. Siciak: Extremal plurisubharmonic functions in ${\mathbb C}^{n}$,	Ann. Polon. Math. {\bf 39}
	(1981), 175-211.
	




 \bibitem[Sic97]{Sic97}  J. Siciak:
 Wiener's type regularity criteria on the complex plane,
 Volume dedicated to the memory of W\l odzimierz Mlak, Ann. Polon. Math.
 {\bf 66},
 (1997),
 203-221.

\bibitem[Wan12]{Wan12} Y. Wang: A viscosity approach to the Dirichlet problem for complex Monge-Amp\`{e}re equations, Math. Z. {\bf 272} (2012), no. 1-2, 497-513. 
 
\bibitem[Zer13]{Zer13} A. Zeriahi: A viscosity approach to degenerate complex Monge-Amp\`{e}re equations, Ann. Fac. Sci. Toulouse Math. (6) {\bf 22} (2013),
no. 4, 843-913.
\end{thebibliography}
\end{document}